\newtheorem{theorem}{Theorem}[section]
\newtheorem*{theorem*}{Theorem A}
\newtheorem{lemma}[theorem]{Lemma}
\newtheorem*{definition*}{Definition}
\newtheorem*{remark*}{Remark}
\newtheorem*{observation*}{Observation}
\newtheorem*{assumption*}{Assumption}
\theoremstyle{definition}
\theoremstyle{remark}
\newtheorem{remark}{Remark}[section]
\newcommand{\Z}{\mathbb{Z}}
\newcommand{\D}{\mathbb{D}}
\newcommand{\T}{\mathbb{T}}
\newcommand{\K}{\mathcal{K}}
\begin{document}

\title{Weighted inequality of integral operators induced by Hardy kernels}

\author
{Zipeng Wang}
\address{Zipeng Wang\\
	College of Mathematics and Statistics, Chongqing University\\
	Chongqing, 401331, China}
\email{zipengwang2012@gmail.com}

\author
{Kenan Zhang}
\address
{Kenan Zhang\\
	School of Mathematical Sciences, Fudan University\\
	Shanghai, 200433, China}
\email{knzhang21@m.fudan.edu.cn}

\thanks{}

\begin{abstract}
For doubling weights, we obtain a necessary and sufficient condition such that the one weighted inequality of the integral operator induced by Hardy kernels on the unit disk holds. This confirms a conjecture by Guo and Wang in such situations. 
\end{abstract}

\subjclass[2020]{47G10, 42A50.}
\keywords{Hardy kernel, one weight inequality}

\maketitle

\setcounter{equation}{0}

\section{Introduction}
Let $\alpha>0$ and $\mathbb{D}$ be the unit disk in the complex plane. Cheng-Fang-Wang-Yu \cite{CFWY} considered the following integral operators on the unit disk
\[
	(K_\alpha f)(z)=\int_{\D}f(\lambda)\K_\alpha(z,\lambda)dA(\lambda),
\]
where
\[
\K_\alpha(z,\lambda)=\frac{1}{(1-z\overline{\lambda})^\alpha}.
\]

\vspace{0.1in}

The intergal kernel $\K_\alpha$ is a typical reproducking kernel, in particular unitary invariance kernels \cite{GHX}, for certain Hilbert spaces of holomorphic functions on the unit disk. For example, the operator $\K_2$ is the Bergman projection and $\K_1$ is an operator induced by the Hardy kernel. Cheng-Fang-Wang-Yu in \cite{CFWY} descripted the range of $(p,q)\in [1,+\infty]\times[1,+\infty]$ such that $K_\alpha$ is bounded from $L^p(\mathbb{D})$ to $L^q(\mathbb{D})$.

\vspace{0.1in}
A weight $\omega$ is a positive integrable function on the unit disk. For $p\geq 1$, the weighted space $L^p(\omega)$ is the Banach space
\[
\{f:f\mbox{ is a complex-valued function on $\D$ with }\|f\|_{L^p(\omega)}<\infty\},
\]
where
\[
\|f\|_{L^p(\omega)}:=
\bigg(\int_\mathbb{D}|f(z)|^p\omega(z)dA(z)\bigg)^{1/p}
\]
and $dA(z)=\frac{1}{\pi}dxdy$ is the normalized area measure. 

\vspace{0.1in}

The one weight inequality for $K_\alpha$ is to characterize the weight $\omega$ such that $$K_\alpha:L^p(\omega)\to L^p(\omega)$$ is bounded. When $\alpha=2$, the corresponding one weight inequality is the one weight problem of Bergman projection, and is well understood by the remarkable Bekoll\'e-Bonami weight \cite{FW2015}.

To the best of our knowledge, except the Bergman projection, all the other one weight inequality problems of the integral operator $K_\alpha$ remain open.

Let $I\subset\mathbb{T}$ be an interval on the unit circle. The associated Carleson box $Q_I$ in the unit disk is
\[
Q_I=\{z\in\mathbb{D}:z/|z|\in I,1-|I|<|z|<1\},
\]
where $|I|$ is the normalized arc length of the interval $I$.  

Let  $|Q_I|=\int_{Q_I}dA$ and $|Q_I|_\omega=\int_{Q_I}\omega(z) dA(z)$.
In 2015, Guo-Wang \cite{G-W} proved that if a weight $\omega$ on the unit disk satisfies
\[
\sup_{I:I\subset\mathbb{T}}\frac{|Q_I|_\omega|Q_I|_{\omega^{-1}}}{|Q_I|^{3/2}}<\infty,
\]
then $K_1$ is bounded on $L^2(\omega)$. They also conjectured that the necessary and sufficient condition for the boundedness of $K_1$ on $L^2(\omega)$ is
\[
\sup_{I:I\subset\mathbb{T}}\frac{|Q_I|_\omega|Q_I|_{\omega^{-1}}}{|Q_I|}<\infty.
\]

\vspace{0.1in}

In this note, under the assumption on the doubling condition, we confirm Guo-Wang's conjecture. Moreover, for any $\alpha>1$ we characterize the one weight inequality for $K_\alpha$ on the weighted space $L^p(\omega)$ for  $1<p<\infty$. 

Recall a measure $\mu$ on the unit disk satisfies the doubling conditions if there extists a constant $C>0$ such that $\mu(B(z,2r))\leq C\mu(B(z,r))$ for any $z\in\mathbb{D},r>0$, where $B(z,r)$ is the Euclidean disk on the complex plane with center $z$ and radius $r$.

\begin{theorem}\label{main}
	Let $\alpha>0$ and $1<p,p'<\infty$ with $\frac{1}{p}+\frac{1}{p'}=1$.
	Let $\omega$ be a weight on the unit disk $\D$ and $\sigma=\omega^{1-p'}$ be its dual weight. 
	Suppose that both $\omega$ and $\sigma$ are doubling weights. Then the operator $K_\alpha$ is bounded on $L^p(\omega)$ if and only if 
	\[
		[\omega]_{p,\alpha}:=\sup\limits_{I\subset \T,\ interval}\frac{|Q_I|_\omega |Q_I|_\sigma^{p-1}}{|Q_I|^{\frac{p \alpha}{2}}}<\infty.
	\]
\end{theorem}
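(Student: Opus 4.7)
The plan is to prove necessity by directly testing $K_\alpha$ against weighted characteristic functions of Carleson boxes, and sufficiency by pointwise dominating $|K_\alpha f|$ by a positive dyadic operator adapted to the Carleson-box structure, whose $L^p(\omega)$-boundedness follows from the Carleson embedding theorem (equivalently, Sawyer-type testing). The doubling hypothesis on both $\omega$ and $\sigma$ is the decisive ingredient allowing the resulting Carleson-packing condition to be derived from the single-testing hypothesis $[\omega]_{p,\alpha}<\infty$.

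For necessity, fix an interval $I\subset\T$. Since $|1-z\overline{\lambda}|\lesssim|I|$ whenever $z,\lambda\in Q_I$, we have $|K_\alpha(\chi_{Q_I}\sigma)(z)|\gtrsim|I|^{-\alpha}|Q_I|_\sigma$ for every $z\in Q_I$. The relation $\sigma^p\omega=\sigma$ (which follows from $\sigma=\omega^{1-p'}$) gives $\|\chi_{Q_I}\sigma\|_{L^p(\omega)}^p=|Q_I|_\sigma$, so combining with $|Q_I|\asymp|I|^2$ forces $[\omega]_{p,\alpha}\lesssim\|K_\alpha\|^p_{L^p(\omega)\to L^p(\omega)}$.

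For sufficiency, fix a system $\mathcal{D}$ of finitely many shifted dyadic grids of intervals on $\T$ and let $T_I\subset Q_I$ denote the top half of $Q_I$. A standard Whitney-type dyadic decomposition of the kernel shows that $|K_\alpha f(z)|$ is pointwise dominated by a positive dyadic operator $T^{\mathcal{D}}_\alpha|f|(z)$ built from the Carleson boxes $\{Q_I\}_{I\in\mathcal{D}}$ with coefficients $|I|^{-\alpha}$. Writing $f=h\sigma$ and exploiting the disjointness of the tents, the $L^p(\omega)$-bound on $T^{\mathcal{D}}_\alpha$ reduces, via the Carleson embedding theorem applied to the measure $\sigma\,dA$ on $\D$, to verifying a Carleson-packing condition of the form
\[
\sum_{I\subset I_0}c_I\lesssim|Q_{I_0}|_\sigma\quad\text{for every }I_0\in\mathcal{D},
\]
for the family of coefficients $c_I:=|T_I|_\omega|Q_I|_\sigma^{p}/|I|^{p\alpha}$.

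The last step, verifying the packing condition, is where doubling becomes decisive. Each tent $T_I$ contains a Euclidean disk of radius $\asymp|I|$ and $Q_I$ is contained in one, so Euclidean doubling of both $\omega$ and $\sigma$ yields $|T_I|_\omega\asymp|Q_I|_\omega$ and $|T_I|_\sigma\asymp|Q_I|_\sigma$ uniformly in $I$. Combined with $[\omega]_{p,\alpha}<\infty$,
\[
c_I\asymp\frac{|Q_I|_\omega|Q_I|_\sigma^{p-1}}{|I|^{p\alpha}}\,|Q_I|_\sigma\leq[\omega]_{p,\alpha}|Q_I|_\sigma\asymp[\omega]_{p,\alpha}|T_I|_\sigma,
\]
and summing via the disjoint partition $Q_{I_0}=\bigsqcup_{I\subset I_0}T_I$ yields $\sum_{I\subset I_0}c_I\lesssim[\omega]_{p,\alpha}|Q_{I_0}|_\sigma$. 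The main obstacle is precisely this reduction: without doubling, the comparisons $|T_I|_\omega\asymp|Q_I|_\omega$ and $|T_I|_\sigma\asymp|Q_I|_\sigma$ can fail dramatically, and the single-testing hypothesis $[\omega]_{p,\alpha}<\infty$ is not expected to imply the required Carleson-packing condition --- which is exactly why Guo--Wang's conjecture remains open outside the doubling regime. A secondary technical point is the fidelity of the Whitney-type dyadic bound on $\K_\alpha$; this is handled in the standard way by ensuring, via a small number of shifted dyadic grids, that every pair $(z,\lambda)$ is captured by some $I$ with $|I|\asymp|1-z\overline{\lambda}|$.
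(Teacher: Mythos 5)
Your overall strategy coincides with the paper's: test against $\sigma\chi_{Q_I}$ for necessity, and for sufficiency dominate $K_\alpha$ pointwise by positive dyadic operators over shifted grids and invoke a Carleson embedding, using doubling to compare tents with boxes. However, the necessity argument contains a genuine gap. You infer $|K_\alpha(\sigma\chi_{Q_I})(z)|\gtrsim |I|^{-\alpha}|Q_I|_\sigma$ for $z\in Q_I$ from the pointwise bound $|1-z\overline{\lambda}|\lesssim |I|$ on $Q_I\times Q_I$. This silently assumes there is no cancellation in $\int_{Q_I}(1-z\overline{\lambda})^{-\alpha}\sigma(\lambda)\,dA(\lambda)$, but $\K_\alpha$ is complex-valued: for $z\in Q_I$ near the boundary, $\arg(1-z\overline{\lambda})$ sweeps essentially all of $(-\pi/2,\pi/2)$ as $\lambda$ runs over $Q_I$, so $\arg \K_\alpha(z,\lambda)$ varies over an interval of length about $\alpha\pi$. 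For $\alpha<1$ you could repair this by taking real parts, since $\mathrm{Re}\,\K_\alpha\geq \cos(\alpha\pi/2)\,|\K_\alpha|$; but for $\alpha\geq 1$ --- including the Hardy-kernel case $\alpha=1$ that motivates the conjecture --- no such lower bound on the real part holds, and for $\alpha\geq 2$ the real part actually changes sign on $Q_I\times Q_I$. This is precisely why the paper's Lemma~\ref{necessity} evaluates $K_\alpha g$ not on $Q_I$ but on a separated box $Q_J$ with $|J|=|I|$ at distance $\asymp d|I|$ from $I$, where the kernel stays within a factor $1/2$ of the constant $(1-z\overline{c})^{-\alpha}$ and the modulus bound survives integration; one then has to return from $|Q_J|_\omega$ to $|Q_I|_\omega$, which is harmless under doubling but must be addressed.

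On sufficiency, your reduction ``via the Carleson embedding theorem applied to $\sigma\,dA$'' to the single packing condition $\sum_{I\subseteq I_0}c_I\lesssim |Q_{I_0}|_\sigma$ with $c_I=|T_I|_\omega|Q_I|_\sigma^p/|I|^{p\alpha}$ is not justified as stated: decomposing $\D$ into the disjoint tents gives $\|T_{\alpha,\sigma}h\|_{L^p(\omega)}^p=\sum_J|T_J|_\omega\bigl(\sum_{I\supseteq J}|I|^{-\alpha}\int_{Q_I}h\sigma\,dA\bigr)^p$, and your $c_I$ accounts only for the diagonal term $I=J$ of the inner sum; controlling the whole ancestor chain requires an additional argument, and a single packing condition does not in general imply two-weight bounds for positive dyadic operators. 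The paper instead estimates the bilinear form $\langle T^\beta_{\alpha,\sigma}f,g\rangle_{\omega}=\sum_I|Q_I|^{-\alpha/2}\bigl(\int_{Q_I}f\sigma\bigr)\bigl(\int_{Q_I}g\omega\bigr)$, splits the coefficient using $[\omega]_{p,\alpha}$, and applies H\"older together with \emph{two} Carleson embeddings (Lemma~\ref{theorem2.5}), one for $\sigma\,dA$ and one for $\omega\,dA$; the reverse doubling of both weights (a consequence of doubling) is used exactly there. Your identification of doubling as the mechanism giving $|T_I|_\omega\asymp|Q_I|_\omega$ and $|T_I|_\sigma\asymp|Q_I|_\sigma$ is correct and matches the paper's use of reverse doubling, but the intermediate reduction should be replaced by the bilinear argument or completed with a chain-summation estimate.
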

\begin{remark}
The only if part holds without requiring the doubling conditions.  The if part remains true under the reverse doubling conditions \eqref{reverse-dou}, which are slightly weaker than the standard doubling condition.
\end{remark}

\section{The proof of Theorem \ref{main}}
\vspace{0.1cm}
\subsection{The necessity}
Assume that $K_\alpha$ is bounded on $L^p(\omega)$. 
By Sawyer's duality trick \cite{Sawyer1982}, the operator 
\[
(K_\alpha f)(z)=\int_{\D}f(\lambda)\K_\alpha(z,\lambda) dA(\lambda)
\]
is bounded on $L^p(\omega)$ if and only if
\[
(K_{\alpha,\sigma} f)(z):=(K_\alpha (\sigma f))(z)=\int_{\D}f(\lambda)\K_\alpha(z,\lambda)\sigma(\lambda) dA(\lambda)
\]
is bounded from $L^p(\sigma)$ to $L^p(\omega)$. Let
\[
A=\|K_{\alpha,\sigma}\|_{L^p(\sigma)\to L^p(\omega)}.
\]
Then the following weak-type inequality holds
\begin{equation}\label{equ}
	|\{z\in\D:|(K_{\alpha,\sigma} f)(z)|>\gamma\}|_\omega\leqslant\frac{A^p}{\gamma^p}\int_{\D}|f(z)|^p\sigma(z) dA(z).
\end{equation}

\begin{lemma}\label{necessity}
	For $\alpha>0$, there exist intervals $I,J\in\T$ such that $|J|=|I|$, and for each $g\geqslant0$ supported in $Q_I$ and each $z\in Q_J$, we have
	\[
	|(K_\alpha g)(z)|\geqslant \frac{C_1}{|Q_I|^{\frac{\alpha}{2}}}\int_{\D}g(\lambda) dA(\lambda),
	\]
	where $C_1$ depentent only on $\alpha$.
\end{lemma}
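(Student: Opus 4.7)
The plan is to construct $I$ and $J$ as arcs on $\T$ of a common small length $s$, with $J$ displaced from $I$ by an angular offset of the form $C_\alpha s$, where $C_\alpha$ is a large constant chosen in terms of $\alpha$. This engineered separation will simultaneously keep $|1-z\bar\lambda|$ of order $s$ (so that $|\K_\alpha(z,\lambda)|$ is comparable to $|Q_I|^{-\alpha/2} \asymp s^{-\alpha}$) and confine $\arg(1-z\bar\lambda)$ to a short arc in $\R$ (so that integrating $\K_\alpha(z,\cdot)$ against a nonnegative $g$ does not produce cancellation).

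Concretely, I would take $I = \{e^{i\theta} : |\theta| < \pi s\}$ and $J = \{e^{i\theta} : |\theta - C_\alpha s| < \pi s\}$, with $s$ so small (depending on $C_\alpha$) that trigonometric linearizations are accurate. For $z = r_z e^{i\theta_z} \in Q_J$ and $\lambda = r_\lambda e^{i\theta_\lambda} \in Q_I$, set $u = r_z r_\lambda \in ((1-s)^2, 1)$ and $\psi = \theta_z - \theta_\lambda \in ((C_\alpha - 2\pi)s, (C_\alpha + 2\pi)s)$, so that $1 - z\bar\lambda = (1 - u\cos\psi) - iu\sin\psi$. Elementary trigonometric estimates would then give that $|1 - z\bar\lambda|$ is at most a $C_\alpha$-dependent constant times $s$, hence $|\K_\alpha(z,\lambda)| \geq c_\alpha s^{-\alpha}$; and that $\arg(1 - z\bar\lambda)$ lies in a subinterval of $(-\pi/2, 0)$ of length $O(1/C_\alpha)$, obtained by comparing the positive real part $1 - u\cos\psi \leq 2s$ with the negative imaginary part $-u\sin\psi$ of magnitude at least $\tfrac{1}{2}(C_\alpha - 2\pi)s$. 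Choosing $C_\alpha$ large enough depending on $\alpha$ then makes the range of $\arg(\K_\alpha(z,\lambda)) = -\alpha \arg(1-z\bar\lambda)$ have total width strictly less than $\pi/2$.

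With such a $C_\alpha$ fixed, let $\beta_0$ be the midpoint of the range of $\arg(\K_\alpha(z,\lambda))$ over all $(z,\lambda) \in Q_J \times Q_I$. Then $\mathrm{Re}\bigl(e^{-i\beta_0}\K_\alpha(z,\lambda)\bigr) \geq \cos(\pi/4)\,|\K_\alpha(z,\lambda)| \geq c_\alpha' s^{-\alpha}$ uniformly. Integrating against any $g \geq 0$ supported in $Q_I$,
\[
|(K_\alpha g)(z)| \;\geq\; \mathrm{Re}\!\left(e^{-i\beta_0}(K_\alpha g)(z)\right) \;\geq\; c_\alpha' s^{-\alpha} \int_\D g(\lambda)\,dA(\lambda),
\]
and using $|Q_I| \asymp s^2$ with an absolute constant, the stated inequality follows with $C_1$ depending only on $\alpha$.

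The main technical obstacle is the tension between the two requirements. If one takes $J = I$, then $|1 - z\bar\lambda|$ is indeed of order $s$ as needed, but $\arg(1-z\bar\lambda)$ can sweep across most of $(-\pi/2, \pi/2)$ and produce cancellations that destroy the lower bound for $\alpha$ not small; conversely, if $J$ is placed at a fixed angular distance from $I$ (independent of $s$), then the argument is essentially constant but $|1-z\bar\lambda|$ becomes of order $1$, so the required magnitude bound $|\K_\alpha| \gtrsim s^{-\alpha}$ fails as $s \to 0$. The intermediate choice of angular offset proportional to $s$, with proportionality constant $C_\alpha$ growing as needed with $\alpha$, resolves both problems simultaneously.
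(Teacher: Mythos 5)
Your proposal is correct, and its geometric core is the same as the paper's: both take $I$ and $J$ to be arcs of equal length $|I|$ whose angular separation is a large multiple $d$ (your $C_\alpha$) of $|I|$, with $d$ chosen large in terms of $\alpha$, so that $|1-z\overline{\lambda}|\asymp_\alpha |I|$ for $z\in Q_J$, $\lambda\in Q_I$ while the kernel values $\K_\alpha(z,\lambda)$ cluster tightly enough to prevent cancellation. You even identify explicitly the tension (offset $0$ versus offset of order $1$) that forces this intermediate scaling, which is exactly what drives the paper's choice $J=(d\theta,(d+1)\theta)$ for $I=(0,\theta)$. Where you diverge is the mechanism for ruling out cancellation: the paper writes $\K_\alpha(z,\lambda)=\K_\alpha(z,c)+\bigl(\K_\alpha(z,\lambda)-\K_\alpha(z,c)\bigr)$ with $c$ the center of $Q_I$ and uses the mean value theorem to force the error term below $\tfrac12|\K_\alpha(z,c)|$, so the integral is dominated by the constant term; you instead confine $\arg(1-z\overline{\lambda})$ to an interval of width $O(1/C_\alpha)$ near $-\pi/2$, conclude that $\arg\K_\alpha(z,\lambda)$ varies by less than $\pi/2$, and rotate by the midpoint phase $e^{-i\beta_0}$ to get a pointwise lower bound on the real part. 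Both are legitimate; your phase argument avoids differentiating $w\mapsto w^{-\alpha}$ along a segment (and the attendant bookkeeping with the intermediate point $\xi$), at the cost of having to track the principal branch of $w^\alpha$ and verify $\mathrm{Re}(1-z\overline{\lambda})>0$, which holds automatically since $|z\overline{\lambda}|<1$. The only small points to tighten in a write-up are the constant in ``$1-u\cos\psi\leq 2s$'' (the term $u(1-\cos\psi)=O(C_\alpha^2 s^2)$ must be absorbed, which your smallness assumption on $s$ covers) and the observation that the construction is rotation-invariant, so it applies to every sufficiently short $I$ as the subsequent necessity argument requires.
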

\begin{proof}
	Denote the center of $Q_I$ by $c$. 
	Then for fixed $g\geqslant0$ supporting on $Q_I$, we have
	\begin{equation*}
		\begin{aligned}
			(K_\alpha g)(z)&=\int_{\D}g(\lambda)\K_\alpha(z,\lambda)\sigma(\lambda) dA(\lambda)\\
			&=\frac{1}{(1-z\overline{c})^\alpha}\int_{\D}g(\lambda) dA(\lambda)+\int_{\D}\left(\frac{1}{(1-z\overline{\lambda})^\alpha}-\frac{1}{(1-z\overline{c})^\alpha}\right)g(\lambda) dA(\lambda).
		\end{aligned}
	\end{equation*}
	Consider the bracketed part in the latter term, by the mean value theorem
	\begin{equation*}
		\begin{aligned}
			\left|\frac{1}{(1-z\overline{\lambda})^\alpha}-\frac{1}{(1-z\overline{c})^\alpha}\right|
			&=\left|\frac{1}{(1-z\overline{c})^\alpha}\right|\left|\frac{(1-z\overline{c})^\alpha-(1-z\overline{\lambda})^\alpha}{(1-z\overline{\lambda})^\alpha}\right|\\
			&=\frac{1}{|1-z\overline{c}|^\alpha}\frac{\alpha|z\overline{c}-z\overline{\lambda}||1-z\overline{\xi}|^{\alpha-1}}{|1-z\overline{\lambda}|^\alpha},
		\end{aligned}
	\end{equation*}
	where $\xi$ is a point on the segment connecting $c$ and $\lambda$.
	
	Here we estimate the second term in the last equation. We identify the circle $\T$ with the interval $[0,2\pi)$.
	Let $I=(0,\theta)$ and $J=(d\theta,(d+1)\theta)$ for some $d>2$ and $0<(d+1)\theta<\pi/2$.
	Assume $z=r_1 e^{i\xi_1}\in Q_J$ and $\lambda=r_2 e^{i\xi_2}\in Q_I$, then
	\begin{equation}\label{box}
		\begin{aligned}
			|1-z\overline{\lambda}|^2&=|1-r_1r_2e^{i(\xi_1-\xi_2)}|^2\\
			&=1+r_1^2r_2^2-2r_1r_2\cos(\xi_1-\xi_2)\\
			&=2r_1r_2(1-\cos(\xi_1-\xi_2))+(1-r_1r_2)^2\\
			&=4r_1r_2\sin^2\left(\frac{\xi_1-\xi_2}{2}\right)+(1-r_1r_2)^2\\
			&\geqslant \frac{4}{\pi}(1-\theta)^2(d-1)^2\theta^2
		\end{aligned}
	\end{equation}
	Since $d>2$ and $0<(d+1)\theta<\pi/2$, we have 
	\[
		1-\theta\geqslant1-\frac{\pi}{2(d+1)}\geqslant1-\frac{\pi}{6}.
	\]
	Let the constant $a=\frac{2}{\sqrt{\pi}}\left(1-\frac{\pi}{6}\right)$, we have
	\[
		|1-z\overline{\lambda}|\geqslant a(d-1)\theta.
	\]
	Besides, 
	\begin{equation*}
		\begin{aligned}
			|c-\lambda|^2
			&=|c|^2+r_2^2-2|c|r_2\cos\left(\frac{\theta}{2}-\xi_2\right)\\
			&=2|c|r_2\left(1-\cos\left(\frac{\theta}{2}-\xi_2\right)\right)+(|c|-r_2)^2\\
			&=4|c|r_2\sin^2\left(\frac{\theta}{2}-\xi_2\right)+(|c|-r_2)^2\\
			&\leqslant\theta^2+\frac{\theta^2}{4}\\
			&=\frac{5}{4}\theta^2,
		\end{aligned}
	\end{equation*}
	Then
	\[
		|c-\lambda|\leqslant \frac{\sqrt{5}}{2}\theta.
	\]
	Since
	\begin{equation*}
		\begin{aligned}
			|1-z\overline{\xi}|
			&\leqslant|1-z\overline{\lambda}|+|z| |\overline{\xi}-\overline{\lambda}|\\
			&\leqslant|1-z\overline{\lambda}|+|c-\lambda|,
		\end{aligned}
	\end{equation*}
	 we have
	\begin{equation*}
		\begin{aligned}
			\frac{|1-z\overline{\xi}|}{|1-z\overline{\lambda}|}
			&\leqslant 1+\frac{|c-\lambda|}{|1-z\overline{\lambda}|}\\
			&\leqslant 1+\frac{\sqrt{5}}{2a(d-1)}\\
			&\leqslant 1+\frac{\sqrt{5}}{2a}.
		\end{aligned}
	\end{equation*}
	Denote the constan $b=1+\frac{\sqrt{5}}{2a}$. Then we can choose $d$ large enough such that 
	\[
		\frac{\alpha|z\overline{c}-z\overline{\lambda}||1-z\overline{\xi}|^{\alpha-1}}{|1-z\overline{\lambda}|^\alpha}
		\leqslant\frac{\sqrt{5} b^{\alpha-1}\alpha}{2a(d-1)}\leqslant \frac{1}{2}.
	\]
	Hence,
	\[
		\left|\frac{1}{(1-z\overline{\lambda})^\alpha}-\frac{1}{(1-z\overline{c})^\alpha}\right|\leqslant\frac{1}{2}\frac{1}{|1-z\overline{c}|^\alpha}.
	\]
	Then by \eqref{box}, there exists a constant $C_1>0$ depentent only on $\alpha$ such that 
	\begin{equation*}
		\begin{aligned}
			|(K_\alpha g)(z)|&\geqslant\frac{1}{2}\frac{1}{|1-z\overline{c}|^\alpha}\int_{\D}g(\lambda) dA(\lambda)\\
			&\geqslant \frac{C_1}{|Q_I|^{\frac{\alpha}{2}}}\int_{\D}g(\lambda) dA(\lambda).
		\end{aligned}
	\end{equation*}
\end{proof}
Let $Q_I$ be a Carleson box on $\D$ small enough and $g(\lambda)=\sigma(\lambda)\chi_{Q_I}(\lambda)$.
Then by the weak-type estimate \eqref{necessity} there exists a Carleson box $Q_J$ such that for all $z\in Q_J$
\[
|(K_\alpha g)(z)|\geqslant \frac{C_1}{|Q_I|^{\frac{\alpha}{2}}}\int_{\D}g(\lambda) dA(\lambda)=C_1 \frac{|Q_I|_\sigma}{|Q_I|^{\frac{\alpha}{2}}}.
\]
Let $f(z)=\chi_{Q_J}(z)$, then $K_{\alpha,\sigma}f=K_\alpha g$ and by \eqref{equ}, we have
\[
|Q_J|_\omega
\leqslant \left|\left\{z\in\D:|(K_{\alpha,\sigma} f)(z)|>C_1 \frac{|Q_I|_\sigma}{|Q_I|^{\frac{\alpha}{2}}}\right\}\right|_\omega
\leqslant \left(\frac{A}{C_1}\right)^p\frac{|Q_I|^{\frac{p\alpha}{2}}}{|Q_I|_\sigma^p}\int_{Q_I}\sigma(z) dA(z).
\]
Denote $C_2=\left(\frac{A}{C_1}\right)^p$, then we have
\[
\frac{|Q_J|_\omega |Q_I|_\sigma^{p-1}}{|Q_I|^{\frac{p \alpha}{2}}}\leqslant C_2.
\]
Since $|I|=|J|$ by Lemma \ref{necessity}, 
\[
\sup\limits_{I\subset \T,\ interval}\frac{|Q_I|_\omega |Q_I|_\sigma^{p-1}}{|Q_I|^{\frac{p \alpha}{2}}}\leqslant C_2<\infty.
\]
This completes the proof of the necessary part.

\subsection{The Sufficienty}
We consider the following dyadic systems on the unit circle $\T$: 
\[
\mathcal{D}^0:=\left\{\left[\frac{2\pi k}{2^j},\frac{2\pi (k+1)}{2^j}\right):j\in\Z_+,k\in\Z_+,0\leqslant k\leqslant 2^j-1\right\},
\]
and
\[
\mathcal{D}^{\frac{1}{3}}:=\left\{\left[\frac{2\pi k}{2^j}+\frac{2\pi}{3},\frac{2\pi (k+1)}{2^j}+\frac{2\pi}{3}\right):j\in\Z_+,k\in\Z_+,0\leqslant k\leqslant 2^j-1\right\}.
\]
Denote the corresponding Carleson box system on the unit disk by
\[
\mathcal{Q}^\beta:=\{Q_I:I\in\mathcal{D}^\beta\},\quad \beta\in\{0,\frac{1}{3}\}.
\]
For each $\beta\in\{0,\frac{1}{3}\}$, we define an operator $T_\alpha^\beta$ on $L^1(\mathbb{D})$ by 
\[
(T_\alpha^\beta f)(z)=\sum\limits_{I\in\mathcal{D}^\beta}\frac{1}{|Q_I|^{\frac{\alpha}{2}}}\int_{Q_I}f(\lambda)dA(\lambda)\chi_{Q_I}(z),
\]
where $\chi_{Q_I}(z)$ is the characteristic function of $Q_I$. 

The following lemma illustrates that $K_\alpha$ can be pointwise controlled by $T_\alpha^\beta,\beta\in\{0,1/3\}$.
\begin{lemma}\label{lemma2.4}
	There exists a constant $C_3>0$ such that for any $f\geqslant0$ and $z\in\D$, we have
	\[
	|K_\alpha f(z)|\leqslant C_3\left((T_\alpha^0 f)(z)+(T_\alpha^{\frac{1}{3}} f)(z)\right).
	\]
\end{lemma}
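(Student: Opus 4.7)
The plan is to dominate $K_\alpha f(z)$ pointwise by decomposing the domain into pseudo-hyperbolic annuli around $z$ and then using a one-third-trick style argument to engulf each annulus inside a single Carleson box drawn from one of the two shifted systems $\mathcal{Q}^0,\mathcal{Q}^{1/3}$.

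First, fix $z\in\D$ and decompose $\D=\bigsqcup_{k\geq k_0} A_k(z)$, where $A_k(z):=\{\lambda\in\D:2^{-k-1}\leq|1-z\overline\lambda|<2^{-k}\}$ and $k_0$ is the smallest index for which $A_k(z)\cap\D$ is nonempty. On each $A_k(z)$ one has $|1-z\overline\lambda|^{-\alpha}\asymp 2^{k\alpha}$, so that
\[
(K_\alpha f)(z)\;\leq\;\sum_{k\geq k_0}\int_{A_k(z)}\frac{f(\lambda)}{|1-z\overline\lambda|^\alpha}\,dA(\lambda)\;\lesssim\;\sum_{k\geq k_0}2^{k\alpha}\int_{A_k(z)}f(\lambda)\,dA(\lambda).
\]
The set $A_k(z)$ sits inside the pseudoball $\{\lambda:|1-z\overline\lambda|<2^{-k}\}$, whose angular extent around $\arg z$ and whose radial extent are both $\asymp 2^{-k}$.

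Next I would invoke the following geometric fact, which is the heart of the argument: for every $z\in\D$ and every dyadic scale $2^{-k}$, at least one of the shifts $\beta\in\{0,\tfrac13\}$ yields an interval $I=I(z,k)\in\mathcal{D}^\beta$ with $|I|\asymp 2^{-k}$ such that $z\in Q_I$ and the pseudoball $\{\lambda:|1-z\overline\lambda|<2^{-k}\}$ is contained in $Q_I$. The point of the $2\pi/3$ shift is that for every arc $J\subset\T$ of length $\ell$, at least one of the two systems $\mathcal{D}^0,\mathcal{D}^{1/3}$ contains a dyadic arc of length comparable to $\ell$ that well-interiorly contains $J$; combining this with the radial confinement $1-|I|<|z|<1$, which is automatic once $|I|\gtrsim 2^{-k}$ and $1-|z|\lesssim 2^{-k}$, one gets the claimed containment $A_k(z)\subset Q_{I(z,k)}$.

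Granting this, for each $k$,
\[
2^{k\alpha}\int_{A_k(z)} f\,dA\;\lesssim\;\frac{1}{|Q_{I(z,k)}|^{\alpha/2}}\int_{Q_{I(z,k)}} f(\lambda)\,dA(\lambda)\,\chi_{Q_{I(z,k)}}(z),
\]
since $z\in Q_{I(z,k)}$. Splitting the sum in $k$ according to whether $\beta(z,k)=0$ or $\beta(z,k)=\tfrac13$ and extending each subsum to range over \emph{all} $I\in\mathcal{D}^\beta$ with $z\in Q_I$ (an upper bound) yields
\[
(K_\alpha f)(z)\;\lesssim\;\sum_{\beta\in\{0,1/3\}}\sum_{I\in\mathcal{D}^\beta}\frac{\chi_{Q_I}(z)}{|Q_I|^{\alpha/2}}\int_{Q_I}f\,dA\;=\;(T_\alpha^0 f)(z)+(T_\alpha^{1/3} f)(z),
\]
which is the desired inequality with an absolute constant $C_3$ depending only on $\alpha$.

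The main obstacle I anticipate is the clean verification of the two-grid containment fact: on the circle one ordinarily needs three shifted dyadic systems, so one must exploit specifically that Carleson boxes have matched angular and radial scales, and that $2\pi/3$ is a translation by more than one-quarter of the period, to force every short arc to lie well inside a dyadic arc from one of the two grids. A secondary, but routine, point is to handle the initial scale $k_0$ (where $|1-z\overline\lambda|$ is of order $1$) by absorbing finitely many terms into the constant, since in that regime the pseudoball is comparable to $\D$ itself and the Carleson box at the top scale contains everything.
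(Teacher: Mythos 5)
Your argument is correct in substance and rests on the same two ingredients as the paper's proof, but it is organized differently. The paper works entirely at the kernel level: it proves the pointwise bound $|\K_\alpha(z,\lambda)|\leqslant C_3\bigl(K_\alpha^0(z,\lambda)+K_\alpha^{1/3}(z,\lambda)\bigr)$ for every pair $(z,\lambda)$, where $K_\alpha^\beta$ is the kernel of $T_\alpha^\beta$, by combining (i) the standard fact that any $z,\lambda\in\D$ lie in a common Carleson box $Q_I$ with $|1-z\overline{\lambda}|\asymp|Q_I|^{1/2}$ and (ii) Mei's result that any arc $I\subset\T$ is contained in some $J\in\mathcal{D}^0\cup\mathcal{D}^{1/3}$ with $|J|\leqslant 6|I|$; integrating the kernel bound against $f\geqslant 0$ then yields the lemma with no further work. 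You instead decompose the integral over the annuli $A_k(z)$ and engulf each pseudoball in a single dyadic box. This works, and the ``geometric fact'' you flag as the main obstacle is exactly what (i) and (ii) deliver: Mei's theorem (which the paper cites) shows that \emph{two} translates suffice on the circle, so your worry that three grids might be needed is unfounded --- the shift $2\pi/3$ is chosen precisely for this. Two points of bookkeeping in your version should be made explicit: first, when you extend the sum over $k$ to a sum over all $I\in\mathcal{D}^\beta$ with $z\in Q_I$, you are implicitly using that each dyadic interval arises as $I(z,k)$ for only $O(1)$ values of $k$ (true, since $|I(z,k)|\asymp 2^{-k}$), as otherwise the extension would not be an upper bound for the repeated terms; second, the angular confinement of the pseudoball requires $|z|$ and $|\lambda|$ bounded away from $0$, so the finitely many top scales must indeed be absorbed into the constant as you indicate. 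The kernel-level formulation avoids both issues, which is what the paper's organization buys over yours.
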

\begin{proof}
	Fixed $\beta\in\{0,\frac{1}{3}\}$.
	Let $K_\alpha^\beta$ be the integral kernel of $T_\alpha^\beta$, i.e. for $z,\lambda\in\D$
	\[
	K_\alpha^\beta(z,\lambda)=\sum\limits_{I\in\mathcal{Q}^\beta}\frac{\chi_{Q_I}(z)\chi_{Q_I}(\lambda)}{|Q_I|^{\frac{\alpha}{2}}}.
	\]
	Hence it suffices to prove that for any $z,\lambda\in\D$, there exists a constant $C_3>0$ only depending on $\alpha$ such that
	\[
	|K_\alpha(z,\lambda)|\leqslant C_3\left(K_\alpha^0(z,\lambda)+K_\alpha^{\frac{1}{3}}(z,\lambda)\right).
	\]
	
	Recall the known relation between Bergman-type kernels and Carleson boxes, there exists a constant $C_4$ such that for any $z,\lambda\in\D$, there exists a Carleson box $Q_I$ such that $z,\lambda\in Q_I$ and
	\[
	\frac{1}{C_4}|Q_I|^{\frac{1}{2}}\leqslant|1-z\overline{\lambda}|\leqslant C_4|Q_I|^{\frac{1}{2}}.
	\]
	Then we have
	\[
	|K_\alpha(z,\lambda)|=\bigg|\frac{1}{(1-z\overline{\lambda})^\alpha}\bigg|\leqslant \frac{C_4^\alpha}{|Q_I|^{\frac{\alpha}{2}}}.
	\]
	Since 
	\[
	|Q_I|=
	\left\{\begin{aligned}
		&|I|^2-\frac{|I|^3}{2},\quad&&0<|I|\leqslant 1\\
		&\frac{|I|}{2},\quad&&1<|I|\leqslant 2\pi
	\end{aligned}\right.,
	\]
	we have $$\frac{1}{4\pi}|I|^2\leqslant|Q_I|\leqslant|I|^2.$$
	
	Applying Mei's elegant result \cite[Proposition 2.1]{Mei2003}, we see that for any interval $I\in\T$ there exists an interval $J\in\mathcal{D}^0\cup\mathcal{D}^{\frac{1}{3}}$ such that $I\in J$ and $|J|\leqslant 6|I|$.
	Hence, for any $z,\lambda\in\D$, 
	\begin{equation*}
		\begin{aligned}
			|K_\alpha(z,\lambda)|&\leqslant\frac{C_4^\alpha}{|Q_I|^{\frac{\alpha}{2}}}\\
			&\leqslant\frac{(2\sqrt{\pi}C_4)^\alpha}{|I|^{\alpha}}\\
			&\leqslant\frac{(12\sqrt{\pi}C_4)^\alpha}{|J|^{\alpha}}\\
			&\leqslant\frac{(12\sqrt{\pi}C_4)^\alpha}{|Q_J|^{\frac{\alpha}{2}}}\\
			&\leqslant C_3\left(K_\alpha^0(z,\lambda)+K_\alpha^{\frac{1}{3}}(z,\lambda)\right).
		\end{aligned}
	\end{equation*}
Here $C_3=(12\sqrt{\pi}C_4)^\alpha$.
This completes the proof.
\end{proof}

For a weight $\omega$ on $\D$, we also consider the maximal operator
\[
(M_\omega^\beta f)(z)=\sup\limits_{I\in\mathcal{D}^\beta}\frac{\chi_{Q_I}(z)}{|Q_I|_\omega}\int_{Q_I}|f(\lambda)|\omega(\lambda) dA(\lambda).
\]
By \cite[Theorem 5.7]{Tolsa}, we have
\begin{lemma}\label{max-norm}
	Let $1<p<\infty$ and $\omega$ be a weight on $\mathbb{D}$.
	Then there exists a constant $C_5$ independent of the weight $\omega$ and the dyadic systen $\mathcal{D}^\beta$ such that
	\[
	||M_\omega^\beta f||_{L^p(\omega)}\leqslant C_5||f||_{L^p(\omega)}.
	\]
\end{lemma}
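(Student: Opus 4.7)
The plan is to prove this by a classical Calder\'on-Zygmund/Marcinkiewicz argument applied directly to the dyadic maximal operator $M_\omega^\beta$, exploiting only the nested structure of the Carleson box system $\mathcal{Q}^\beta$.

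First, I would establish the weak-type $(1,1)$ estimate
\[
\bigl|\{z \in \D : (M_\omega^\beta f)(z) > \lambda\}\bigr|_\omega \;\leq\; \frac{1}{\lambda} \int_\D |f(w)|\, \omega(w)\, dA(w), \qquad \lambda > 0.
\]
The crucial structural fact is that for $I, J \in \mathcal{D}^\beta$ the Carleson boxes $Q_I$ and $Q_J$ are either nested or disjoint. Hence from the level set $\{M_\omega^\beta f > \lambda\}$ I can extract a maximal, and therefore pairwise disjoint, subfamily $\{Q_{I_k}\}$ on each of which $\frac{1}{|Q_{I_k}|_\omega}\int_{Q_{I_k}}|f|\omega\,dA > \lambda$. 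Since this family still covers the level set, summing the averages yields the weak-type bound with constant $1$, independent of $\omega$ and $\beta$.

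Coupled with the trivial endpoint inequality $\|M_\omega^\beta f\|_{L^\infty(\omega)} \leq \|f\|_{L^\infty(\omega)}$, the Marcinkiewicz interpolation theorem then produces the $L^p(\omega)$ bound for $1 < p < \infty$ with a constant depending only on $p$. I do not anticipate any genuine obstacle here: the argument works for \emph{any} nonnegative Borel measure $\omega\, dA$, since the nested dyadic structure replaces any need for a Vitali-type covering lemma and removes any dependence on the geometry of $\D$ or on doubling properties of the weight. This is precisely why the cited result of Tolsa can be invoked as a black box in the present setting.
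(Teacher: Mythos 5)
Your argument is correct: the nested-or-disjoint structure of dyadic Carleson boxes (inherited from the dyadic intervals) does yield the weak $(1,1)$ bound with constant $1$ for arbitrary weights, and Marcinkiewicz interpolation with the trivial $L^\infty$ endpoint finishes the proof with a constant depending only on $p$. The paper offers no proof of this lemma at all --- it invokes Tolsa's Theorem 5.7 as a black box --- and your write-up is exactly the standard argument behind that cited result, so it supplies the details the paper omits rather than taking a different route.
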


By \cite[Theorem 5.8]{Tolsa}, \cite[Lemma 10]{FW2015} and Lemma \ref{max-norm}, we have the following Carleson embedding results.
\begin{lemma}\label{theorem2.5}
	Let $\omega$ be a weight on $\D$ with the reverse doubling property and let $\mathcal{Q}^\beta,\beta\in\{0,\frac{1}{3}\}$ be a Carleson system over $\D$. 
	Then there exists a constant $C_6>0$ such that for all $g\in L^p(\omega)$, we have
	\[
	\sum\limits_{I\in\mathcal{Q}^\beta}|Q_I|_\omega\left(\frac{1}{|Q_I|_\omega}\int_{Q_I}g(\lambda) \omega(\lambda) dA(\lambda)\right)^p
	\leqslant C_6 \int_{\D}|g(\lambda)|^p \omega(\lambda) dA(\lambda).
	\]
\end{lemma}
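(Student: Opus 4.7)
The plan is to prove this dyadic weighted Carleson embedding via a standard stopping-time (principal cubes) argument, with the reverse doubling property of $\omega$ entering through a Carleson packing estimate and Lemma \ref{max-norm} entering at the very end. Fix $\beta \in \{0,1/3\}$ and, for $g \geqslant 0$ in $L^p(\omega)$, write $\langle g \rangle_I^\omega := \frac{1}{|Q_I|_\omega}\int_{Q_I} g\,\omega\,dA$ for the $\omega$-average over $Q_I$.

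First I would construct a principal subfamily $\mathcal{P} \subset \mathcal{Q}^\beta$: start from the top-level Carleson boxes, and for each $P \in \mathcal{P}$ declare its principal children to be the maximal $J \subsetneq P$ in $\mathcal{Q}^\beta$ with $\langle g\rangle_J^\omega > 2\langle g\rangle_P^\omega$. For any $I \in \mathcal{Q}^\beta$ let $\pi(I) \in \mathcal{P}$ be its nearest principal ancestor; by construction $\langle g\rangle_I^\omega \leqslant 2\langle g\rangle_{\pi(I)}^\omega$. Grouping the sum according to $\pi$ yields
\[
\sum_{I\in\mathcal{Q}^\beta} |Q_I|_\omega (\langle g\rangle_I^\omega)^p \leqslant 2^p \sum_{P\in\mathcal{P}} (\langle g\rangle_P^\omega)^p \sum_{I:\,\pi(I)=P} |Q_I|_\omega.
\]

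The heart of the argument is the Carleson packing estimate $\sum_{I:\pi(I)=P}|Q_I|_\omega \lesssim |Q_P|_\omega$, which reduces to showing that the principal descendants $\{P'\in\mathcal{P}:P'\subsetneq P\}$ cover at most a fixed fraction $1-\eta$ of $Q_P$ in the $\omega$-measure. This is where the reverse doubling property \eqref{reverse-dou} of $\omega$ is crucial: it prevents the principal children from exhausting all but a set of arbitrarily small $\omega$-mass in $Q_P$, since otherwise the averages on them, each exceeding $2\langle g\rangle_P^\omega$, would force $\langle g\rangle_P^\omega > 2\langle g\rangle_P^\omega$. I expect this packing step to be the main technical obstacle, and it is precisely the content of \cite[Theorem 5.8]{Tolsa} and \cite[Lemma 10]{FW2015} cited in the statement.

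With packing in hand, I introduce the pairwise disjoint stopping regions $E_P := Q_P \setminus \bigcup_{P'\text{ principal child of }P} Q_{P'}$, so that $|E_P|_\omega \gtrsim |Q_P|_\omega$, and on every point of $E_P$ the dyadic maximal function satisfies $M_\omega^\beta g \geqslant \langle g\rangle_P^\omega$. Hence
\[
\sum_{P\in\mathcal{P}} |Q_P|_\omega (\langle g\rangle_P^\omega)^p \lesssim \sum_{P\in\mathcal{P}} \int_{E_P} (M_\omega^\beta g)^p\,\omega\,dA \leqslant \int_{\D} (M_\omega^\beta g)^p\,\omega\,dA,
\]
and Lemma \ref{max-norm} bounds the last integral by $C_5^p \|g\|_{L^p(\omega)}^p$. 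Chaining the three inequalities gives the embedding with $C_6$ depending only on $p$, the reverse doubling constant of $\omega$, and $C_5$.
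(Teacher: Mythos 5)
Your overall architecture---principal cubes, the disjoint exceptional sets $E_P$, and Lemma~\ref{max-norm} at the end---is the standard proof of the dyadic Carleson embedding theorem, and it matches in spirit what the paper does (the paper gives no self-contained argument but assembles the lemma from \cite[Theorem 5.8]{Tolsa}, the packing estimate of \cite[Lemma 10]{FW2015}, and Lemma~\ref{max-norm}). Your final step, $\sum_{P}|Q_P|_\omega(\langle g\rangle_P^\omega)^p\lesssim\int_{\D}(M_\omega^\beta g)^p\omega\,dA$, is correct as set up. The genuine gap is in your justification of the packing estimate $\sum_{I:\pi(I)=P}|Q_I|_\omega\lesssim|Q_P|_\omega$. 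You claim it ``reduces to showing that the principal descendants cover at most a fixed fraction of $Q_P$'' and support this with the averaging argument (``otherwise $\langle g\rangle_P^\omega>2\langle g\rangle_P^\omega$''). Two things are off here. First, that averaging argument uses only the disjointness of the maximal principal children and needs no reverse doubling at all; it is the (correct) proof that $|E_P|_\omega\geqslant\tfrac12|Q_P|_\omega$, not a proof of packing. Second, and more seriously, the coverage bound does not imply the packing estimate: the family $\{I:\pi(I)=P\}$ contains arbitrarily long chains of nested boxes, and if $\omega$ concentrates near the boundary so that $|Q_I|_\omega\simeq|Q_P|_\omega$ along such a chain, then $\sum_{I:\pi(I)=P}|Q_I|_\omega$ diverges even though the principal children cover at most half of $Q_P$. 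So the one step where the hypothesis \eqref{reverse-dou} must actually be used is the step you have not proved.

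The correct argument is a geometric series, not a stopping-time estimate. If $I_1,I_2$ are the two dyadic halves of $I_0$, then $Q_{I_1}\cup Q_{I_2}=B_{I_0}$ disjointly, so \eqref{reverse-dou} gives $|Q_{I_1}|_\omega+|Q_{I_2}|_\omega=|B_{I_0}|_\omega\leqslant\delta|Q_{I_0}|_\omega$; iterating, $\sum_{I\subseteq I_0,\,|I|=2^{-j}|I_0|}|Q_I|_\omega\leqslant\delta^{j}|Q_{I_0}|_\omega$, whence $\sum_{I\subseteq I_0}|Q_I|_\omega\leqslant(1-\delta)^{-1}|Q_{I_0}|_\omega$. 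This is exactly the content of \cite[Lemma 10]{FW2015}, and it is stronger than what you need, since it bounds the sum over \emph{all} descendants of $P$, making the restriction $\pi(I)=P$ irrelevant. With this substitution your proof closes; indeed, once the packing condition is in hand you could dispense with the principal-cube bookkeeping altogether and invoke the abstract Carleson embedding theorem, which is precisely the route the paper takes.
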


\vspace{0.1in}

Here we recall the definition of the reverse doubling property.
Given a Carleson box $Q_I$, set
$
	B_I:=\{z\in\D: \frac{z}{|z|}\in I, 1-\frac{|I|}{2}<|z|<1\},
$
a weight $\omega$ on the unit disk $\D$ has the reverse doubling property if there exists a constant $\delta<1$ such that 
\begin{equation}\label{reverse-dou}
	\frac{|B_I|_\omega}{|Q_I|_\omega}<\delta
\end{equation}
for all intervals $I\subset\T$. 
By \cite[Lemma 6]{FW2015}, a doubling weight on $\mathbb{D}$ has the reverse doubling property.
	 
\vspace{0.1in}

Now, we are ready to prove the sufficiency of Theorem \ref{main}.

Let $1<p<\infty,\frac{1}{p}+\frac{1}{p'}=1$ and $\omega$ be a weight on $\D$, and $\sigma=\omega^{1-p'}$
Assume both $\omega$ and $\sigma$ have the reverse doubling property and $$[\omega]_{p,\alpha}<\infty.$$
By Lemma \ref{lemma2.4}, it suffices to prove $T_\alpha^\beta$ is bounded on $L^p(\omega)$ for each $\beta\in\{0,\frac{1}{3}\}$. By Sawyer's dual arguments \cite{Sawyer1982}, this is equivalent to proving
\[
T^\beta_{\alpha,\sigma}:L^p(\sigma)\longrightarrow L^p(\omega)
\]
is bounded where $\sigma=\omega^{1-p'}$ and
\[
(T^\beta_{\alpha,\sigma}f)(z):=\left(T^\beta_\alpha (\sigma f)\right)(z)=\sum\limits_{I\in\mathcal{Q}^\beta}\frac{1}{|Q_I|^{\frac{\alpha}{2}}}\int_{Q_I}f(\lambda)\sigma(\lambda) dA(\lambda)\chi_{Q_I}(z).
\]
Next, we assume that $f$ and $g$ are non-negative functions on $\mathbb{D}$. For all $g\in L^{p'}(\omega)$, 
\begin{equation*}
	\begin{aligned}
		&\langle T^\beta_{\alpha,\sigma}f, g \rangle_{L^2(\omega)}\\
		=&\sum\limits_{I\in\mathcal{Q}^\beta}\frac{1}{|Q_I|^{\frac{\alpha}{2}}}
		\left(\int_{Q_I}f(\lambda) \sigma(\lambda) dA(\lambda)\right)
		\left(\int_{Q_I}g(\lambda) \omega(\lambda) dA(\lambda)\right)\\
		=&\sum\limits_{I\in\mathcal{Q}^\beta}\frac{|Q_I|_\omega |Q_I|_\sigma}{|Q_I|^{\frac{\alpha}{2}}}
		\left(\frac{1}{|Q_I|_\sigma}\int_{Q_I}f(\lambda) \sigma(\lambda) dA(\lambda)\right)
		\left(\frac{1}{|Q_I|_\omega}\int_{Q_I}g(\lambda) \omega(\lambda) dA(\lambda)\right).
	\end{aligned}
\end{equation*}
Here 
\begin{equation*}
	\begin{aligned}
		\frac{|Q_I|_\omega |Q_I|_\sigma}{|Q_I|^{\frac{\alpha}{2}}}
		&=\frac{|Q_I|_\omega |Q_I|_\sigma}{|Q_I|^{\frac{\alpha}{2}}}
		\frac{|Q_I|^{\frac{\alpha}{2}}}{|Q_I|_\omega^\frac{1}{p} |Q_I|_\sigma^{\frac{p-1}{p}}}
		\left(\frac{|Q_I|_\omega |Q_I|_\sigma^{p-1}}{|Q_I|^{\frac{p \alpha}{2}}}\right)^{\frac{1}{p}}\\
		&\leqslant[\omega]^{\frac{1}{p}}_{p,\alpha}|Q_I|_\omega^\frac{1}{p'} |Q_I|_\sigma^{\frac{1}{p}}.
	\end{aligned}
\end{equation*}
Hence,
\begin{equation*}
	\begin{aligned}
		&\langle T^\beta_{\alpha,\sigma}f, g \rangle_{L^2(\omega)}\\
		\leqslant&[\omega]^{\frac{1}{p}}_{p,\alpha} \sum\limits_{I\in\mathcal{Q}^\beta}
		\left[|Q_I|_\sigma^{\frac{1}{p}}\left(\frac{1}{|Q_I|_\sigma}\int_{Q_I}f(\lambda) \sigma(\lambda) dA(\lambda)\right)\right]
		\left[|Q_I|_\omega^\frac{1}{p'}\left(\frac{1}{|Q_I|_\omega}\int_{Q_I}g(\lambda) \omega(\lambda) dA(\lambda)\right)\right]\\
		\leqslant&[\omega]^{\frac{1}{p}}_{p,\alpha} 
		\left[\sum\limits_{I\in\mathcal{Q}^\beta}|Q_I|_\sigma\left(\frac{1}{|Q_I|_\sigma}\int_{Q_I}f(\lambda) \sigma(\lambda) dA(\lambda)\right)^p\right]^{\frac{1}{p}}
		\left[\sum\limits_{I\in\mathcal{Q}^\beta}|Q_I|_\omega\left(\frac{1}{|Q_I|_\omega}\int_{Q_I}g(\lambda) \omega(\lambda) dA(\lambda)\right)^{p'}\right]^\frac{1}{p'}\\
	\end{aligned}
\end{equation*}
Since $\sigma$ and $\omega$ have the reverse doubling property, by Lemma \ref{theorem2.5}, we have
\[
\langle T^\beta_{\alpha,\sigma}f, g \rangle_{L^2(\omega)}\leqslant C_6 [\omega]^{\frac{1}{p}}_{p,\alpha} ||f||_{L^p(\sigma)}||g||_{L^p(\omega)}.
\]
This completes the proof of the sufficient part, and completes the whole proof of Theorem \ref{main}.


\begin{thebibliography}{10}
\bibitem{CFWY}
C. Cheng, X. Fang, Z. Wang and J. Yu,
\newblock
The hyper-singular cousin of the Bergman projection,
\newblock{\em Trans. AMS.}, 369(12): 8643–8662, 2017.
\href{https://webvpn.fudan.edu.cn/http/77726476706e69737468656265737421fdf655943433615e7b1cc7ad95466d3ab852/mathscinet/article?mr=3710638}{MR3710638}

\bibitem{FW2015}
X. Fang and Z. Wang,
\newblock Two weight inequalities for the {B}ergman projection with doubling measures,
\newblock {\em Taiwanese J. Math.}, 19(3):919--926, 2015.
\href{https://webvpn.fudan.edu.cn/http/77726476706e69737468656265737421fdf655943433615e7b1cc7ad95466d3ab852/mathscinet/article?mr=3353260}{MR3353260}

\bibitem{GHX}
K. Guo, J. Hu and X. Xu, 
\newblock Toeplitz algebras, subnormal tuples and rigidity on reproducing $C[z_1,…,z_d]$-modules, 
\newblock {\em J. Funct. Anal.}, 210, 214–247,2004. 
\href{https://webvpn.fudan.edu.cn/http/77726476706e69737468656265737421fdf655943433615e7b1cc7ad95466d3ab852/mathscinet/article?mr=2052120}{MR2052120}

\bibitem{G-W}
K. Guo and Z. Wang,
\newblock  The one weight inequality of integral operator induced by Hardy kernel (in Chinese),
\newblock { Sci	Sin Math}, 45: 1867--1876, 2015. \href{https://doi.org/10.1360/N012015-00131}{https://doi.org/10.1360/N012015-00131}

\bibitem{Mei2003}
T. Mei,
\newblock {BMO} is the intersection of two translates of dyadic {BMO},
\newblock {\em C. R. Math. Acad. Sci. Paris}, 336:1003--1006, 2003.
\href{https://webvpn.fudan.edu.cn/http/77726476706e69737468656265737421fdf655943433615e7b1cc7ad95466d3ab852/mathscinet/article?mr=1993970}{MR1993970}

\bibitem{Sawyer1982}
E. Sawyer,
\newblock A characterization of a two-weight norm inequality for maximal operators,
\newblock {\em Studia Math.}, 75(1):1-11,1982.
\href{https://webvpn.fudan.edu.cn/http/77726476706e69737468656265737421fdf655943433615e7b1cc7ad95466d3ab852/mathscinet/article?mr=676801}{MR0676801}



\bibitem{Tolsa}
X. Tolsa,
\newblock {\em Analytic capacity, the {C}auchy transform, and non-homogeneous {C}alder\'on-{Z}ygmund theory},
\newblock Progress in Mathematics, vol. 307, Birkh\"auser/Springer, 2014.
\href{https://webvpn.fudan.edu.cn/http/77726476706e69737468656265737421fdf655943433615e7b1cc7ad95466d3ab852/mathscinet/article?mr=3154530}{MR3154530}

\end{thebibliography}
\end{document}